\documentclass[a4paper,12pt]{article}
\usepackage{amssymb}
\usepackage{amsfonts}
\usepackage{amsthm}
\usepackage{amsmath}
\usepackage[all]{xy}
\usepackage[dvips]{graphicx}
\author{Nati Linial}
\title{A King in every two consecutive tournaments}
\author{{Yehuda Afek \thanks{The Blavatnik School of Computer Science, Tel-Aviv University,
                Israel 69978. afek@tau.ac.il}} \and{Eli Gafni \thanks{Computer Science Department, Univ. of California, LA, CA 95024, eli@cs.ucla.edu }}
                \and{Nati Linial \thanks{School of Computer Science and Engineering, Hebrew
                 University, Jerusalem 91904, Israel nati@cs.huji.ac.il}}}

\theoremstyle{plain}  \newtheorem{ther}{Theorem}
\theoremstyle{plain}  
\theoremstyle{remark} 
\theoremstyle{plain}  \newtheorem{cor}[ther]{Corollary}
\theoremstyle{remark} 
\theoremstyle{plain}  
\theoremstyle{plain}  


\begin{document} \maketitle
\date{}
\abstract{
We think of a tournament $T=([n], E)$ as a communication
network where in each round of communication processor $P_i$ sends its information to $P_j$, for
every directed edge $ij \in E(T)$.  By Landau's theorem (1953) there
is a King in $T$, i.e., a processor whose initial input reaches every other processor
in two rounds or less.  Namely, a processor $P_{\nu}$ such that
after two rounds of communication along $T$'s edges, the initial information of $P_{\nu}$
reaches all other processors.  Here we consider a more general
scenario where an adversary selects an arbitrary series of
tournaments $T_1, T_2,\ldots$, so that in each
round $s=1, 2, \ldots$, communication is governed by the corresponding tournament
$T_s$.  We prove that for every series of tournaments that
the adversary selects, it is still true that after two rounds of communication, the initial input of at least one
processor reaches everyone.

Concretely, we show that for every two tournaments $T_1, T_2$ there
is a vertex in $[n]$ that can reach all vertices via (i) A step in
$T_1$, or (ii) A step in $T_2$ or (iii) A step in $T_1$ followed by
a step in $T_2$. }

\section{Introduction}

In a study of computational models in distributed systems~\cite{ag12} the
following problem concerning $n$ communicating processors was posed.
Initially each processor $P_i$ has its own input data item, $m_i$.  Nature
selects a series of tournaments $T_1, T_2,\ldots$ on vertex set $[n]$,
and communication proceeds in rounds.  For every $s=1, 2, \ldots$
each processor $P_i$ communicates in round $s$ every data item that
has reached him so far to every processor $P_j$ with $ij \in E(T_s)$.
By an old theorem of Landau~\cite{lan53}, if $T_1 \equiv T_2$ then
after two rounds a {\em King} emerges. Namely, a processor $P_\nu$, such that $m_\nu$
has reached all processors.
Here we address the question how many rounds are required for a King to emerge
if in each round an arbitrary tournament is selected.
Surprisingly the answer is still $2$.

\section{Two rounds suffice}
Let $T_1=([n], E_1), T_2=([n], E_2)$ be two tournaments. The condition that data item $m_i$ reaches processor $P_j$ after two rounds of communication is denoted by $i \Rightarrow j$. Clearly, this is equivalent to
\[
i=j~~~ \vee~~~( i,j) \in E_1~~\vee~~(i, j) \in E_2~~~\vee~~~\exists k {\mbox~s.t.~} (i, k) \in E_1\mbox{~ and~} (k, j) \in E_2
\]

It is useful to note that the negation of this condition $i \not \Rightarrow j$ is equivalent to

\begin{equation}\label{condition}
i\neq j ~~\wedge ~~(j, i) \in E_1~~\wedge~~(j, i) \in E_2~~~\wedge~~~\Gamma_2(j) \supset \Gamma_1(i)
\end{equation}
where $\Gamma_{\delta}(x)$ is the set of out-neighbors of $x$ in tournament $T_{\delta}$.

\begin{ther}
\label{KingRB}
Let $T_1=([n], E_1), T_2=([n], E_2)$ be two tournaments. Then there is $\nu \in [n]$ such that $\nu \Rightarrow j$ for every $j$.
\end{ther}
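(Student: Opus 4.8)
The plan is to argue by contradiction using the explicit description of failure in \eqref{condition}. Suppose no vertex reaches everyone; then every vertex $v$ admits a \emph{blocker}, i.e.\ a vertex $b(v)$ with $v \not\Rightarrow b(v)$. By \eqref{condition} this means $b(v)\neq v$, that $b(v)$ beats $v$ in both $T_1$ and $T_2$, and that $\Gamma_1(v)\subseteq \Gamma_2(b(v))$. Fixing one such choice gives a total map $b\colon [n]\to[n]$ with no fixed point, and since $[n]$ is finite, iterating $b$ must close up into a cycle. So I would extract distinct vertices $v_1,\dots,v_k$ (with $k\ge 2$) such that $v_t \not\Rightarrow v_{t+1}$ for every $t$, indices read modulo $k$.

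The heart of the argument is then a counting identity on the cycle $C=\{v_1,\dots,v_k\}$. I would record, for each $t$, the within-cycle out-neighbourhoods $A_t=\Gamma_1(v_t)\cap C$ and $B_t=\Gamma_2(v_t)\cap C$. Because $T_1$ and $T_2$ each restrict to a tournament on the $k$ vertices of $C$, their out-degrees sum to the number of edges, giving $\sum_t |A_t| = \sum_t |B_t| = \binom{k}{2}$. On the other hand, the failure condition $v_t\not\Rightarrow v_{t+1}$ supplies two facts at once: the inclusion $\Gamma_1(v_t)\subseteq \Gamma_2(v_{t+1})$ (hence $A_t\subseteq B_{t+1}$ after intersecting with $C$), and the edge $(v_{t+1},v_t)\in E_2$, which places $v_t$ itself in $B_{t+1}$. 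Since $v_t\notin\Gamma_1(v_t)$, this element is genuinely new, so $A_t\cup\{v_t\}\subseteq B_{t+1}$ and $|B_{t+1}|\ge |A_t|+1$.

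Summing this last inequality over the whole cycle and reindexing gives $\binom{k}{2}=\sum_t|B_{t+1}|\ge \sum_t|A_t|+k=\binom{k}{2}+k$, forcing $k\le 0$ — impossible. This contradiction shows the blocker map cannot exist, so some $\nu$ reaches every $j$. The step I expect to be the real obstacle is spotting that one should \emph{not} chase a single potential function along $b$ (the mixed appearance of out-degrees in $T_1$ on one side and $T_2$ on the other resists any naive monovariant), but instead pass to a cycle and play the global edge-count $\binom{k}{2}$ against the per-edge gain of $+1$ coming from the vertex $v_t$ that the inclusion $\Gamma_1(v_t)\subseteq\Gamma_2(v_{t+1})$ misses. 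Verifying that $v_1,\dots,v_k$ are distinct and that $k\ge 2$ (the map $b$ has no fixed point, since $v\Rightarrow v$ trivially) are the only routine points to check.
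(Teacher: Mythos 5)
Your proof is correct, and it takes a genuinely different route from the paper's. The paper argues by minimal counterexample: for each $j$, the theorem applied to $T_1\setminus\{j\}$, $T_2\setminus\{j\}$ yields a vertex $\pi(j)$ that reaches every $k\neq i,j$, and failure of the theorem forces $\pi(j)\not\Rightarrow j$; a short argument shows $\pi$ is injective, hence a permutation of $[n]$, and summing the strict inequality $|\Gamma_2(j)|>|\Gamma_1(\pi(j))|$ coming from \eqref{condition} over all $j$ contradicts $\sum_j|\Gamma_2(j)|=\sum_j|\Gamma_1(\pi(j))|=\binom{n}{2}$. You dispense with induction altogether: the blocker map $b$ exists directly from negating the statement, a cycle $v_1,\dots,v_k$ of $b$ stands in for the permutation, and the count is carried out in the tournaments restricted to $C=\{v_1,\dots,v_k\}$. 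The counting engine is identical --- a forced gain of $+1$ per step, summed around a closed structure, against a fixed total of edges --- but your localization rests on two observations the paper never needs: the inclusion $\Gamma_1(v_t)\subseteq\Gamma_2(v_{t+1})$ survives intersection with $C$, and the strictness witness $v_t$ itself lies in $C$, yielding $|B_{t+1}|\ge|A_t|+1$. Your route buys a fully self-contained argument: no base case, no induction hypothesis, and no separate injectivity argument (distinctness of $v_1,\dots,v_k$ comes free from taking the first repetition when iterating $b$). The paper's route buys a count over the whole vertex set with unrestricted out-degrees, with no need for the restriction step, plus the structural byproduct that in a minimal counterexample every vertex is singled out exactly once.
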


\begin{proof}
By induction on $n$. The statement is easily verified for $n=3$.  Let $n$ be the smallest integer for which the theorem does not hold and let $T_1=([n], E_1), T_2=([n], E_2)$ be a counterexample.  By minimality of $n$, for every $n \ge j \ge 1$ there is some $n \ge i \ge 1$ such that $i \Rightarrow_j k$ for every $k \neq i, j$, where $\Rightarrow_j$ indicates that the relation is defined with respect to tournaments $T_1 \setminus \{j\}$ and $T_2 \setminus \{j\}$. When this happens we say that $i$ is {\em singled out} by $j$. Clearly $i \not \Rightarrow j$, or else the theorem holds with $\nu = i$, since $i \Rightarrow_j k$ clearly implies $i \Rightarrow k$. Consequently, no vertex is singled out more than once. We denote $\pi(j)=i$ and conclude that $\pi$ is a permutation on $[n]$, since $\pi(j)$ is defined for every $n \ge j \ge 1$ and $\pi$ is an injective mapping.

However, this is impossible. By Condition (\ref{condition}), every $j$ satisfies $|\Gamma_2(j)| > |\Gamma_1(\pi(j))|$. But $\sum_j |\Gamma_2(j)| = \sum_j |\Gamma_1(\pi(j))| = {n \choose 2}$, since $\pi$ is a permutation. This contradiction completes the proof.
\end{proof}

The same proof yields as well

\begin{cor}
Let $T_1=([n], E_1), T_2=([n], E_2)$ be two tournaments. Then there is $\mu \in [n]$, such that $i \Rightarrow \mu$ for every $i$.
\end{cor}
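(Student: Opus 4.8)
The plan is to deduce the Corollary from Theorem~\ref{KingRB} by a duality that reverses all arcs and swaps the two rounds. The relation $i \Rightarrow j$ reads ``$m_i$ reaches $P_j$ via a first step in $T_1$ and a second step in $T_2$'', and the natural dual of ``one vertex reaches everyone'' is ``everyone reaches one vertex''. Reversing a single round is not enough: because the order of the two rounds is fixed, I must both reverse every arc and interchange the roles of $T_1$ and $T_2$.

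Concretely, I would set $\hat T_1$ to be the reverse of $T_2$ and $\hat T_2$ to be the reverse of $T_1$; that is, $(x,y)\in \hat E_1 \iff (y,x)\in E_2$ and $(x,y)\in \hat E_2 \iff (y,x)\in E_1$. Writing $\hat{\Rightarrow}$ for two-round reachability in the pair $(\hat T_1,\hat T_2)$, the key step is the identity $i \hat{\Rightarrow} j \iff j \Rightarrow i$ for all $i,j$. This is a direct unfolding of the four disjuncts defining $\Rightarrow$: the equality clause is symmetric, the two single-arc clauses become $(j,i)\in E_2$ and $(j,i)\in E_1$, and the length-two clause $\exists k:(i,k)\in\hat E_1 \wedge (k,j)\in\hat E_2$ translates, after reversal, into $\exists k:(j,k)\in E_1 \wedge (k,i)\in E_2$, which is precisely the length-two clause of $j \Rightarrow i$.

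With this correspondence in hand, I would apply Theorem~\ref{KingRB} to the pair $(\hat T_1,\hat T_2)$ to obtain a vertex $\nu$ with $\nu \hat{\Rightarrow} j$ for every $j$; by the identity this says $j \Rightarrow \nu$ for every $j$, so $\mu=\nu$ witnesses the Corollary. The one thing to get right is the dual pairing: reversing arcs alone would wrongly swap the two rounds, and it is the combination of arc-reversal with the interchange of $T_1$ and $T_2$ that is the involution conjugating $\Rightarrow$ into its converse. Everything else is the routine verification of the displayed bi-implication.

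As the paper suggests, one can instead rerun the induction verbatim in ``sink'' form. Assuming a minimal counterexample, let $\mu_j$ be the vertex that every $k\neq \mu_j,j$ reaches in $T_1\setminus\{j\},T_2\setminus\{j\}$; one argues $j\not\Rightarrow\mu_j$ (else $\mu=\mu_j$ already works, since $k\Rightarrow_j\mu_j$ implies $k\Rightarrow\mu_j$), and that $j\mapsto\mu_j$ is injective, hence a permutation. Condition~(\ref{condition}) applied to $j\not\Rightarrow\mu_j$ then gives $\Gamma_2(\mu_j)\supset\Gamma_1(j)$, so $|\Gamma_2(\mu_j)|>|\Gamma_1(j)|$ for every $j$, contradicting $\sum_j|\Gamma_2(\mu_j)|=\sum_j|\Gamma_1(j)|=\binom{n}{2}$.
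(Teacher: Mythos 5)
Your proposal is correct and is essentially the paper's own proof, which reads in full: ``The same proof works. Just switch between $T_1, T_2$ and reverse all edges in the two tournaments.'' Your explicit verification of the identity $i \mathrel{\hat{\Rightarrow}} j \iff j \Rightarrow i$ (and your observation that reversal alone, without swapping the two tournaments, would not work) merely spells out the swap-and-reverse duality that the paper invokes in one line.
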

\begin{proof}

The same proof works. Just switch between $T_1, T_2$ and reverse all edges in the two tournaments.

\end{proof}

\section{A weaker corollary with a simpler proof}

A simpler proof for a weaker corollary of Theorem \ref{KingRB} was suggested by an anonymous referee.  This corollary is a generalization of \cite{lan53}, and is as follows:

\begin{cor}
\label{cor:king}
Let $T_R=([n], E_{red}), T_B=([n], E_{blue})$ be a red and a blue tournaments. Then there is $\nu \in [n]$, called king, such that for every $j$, $j$ is reachable from $\nu$ with a rainbow directed path of length at most $2$.
\end{cor}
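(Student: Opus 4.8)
The plan is to mimic the classical one-line proof of Landau's theorem, in which one takes a vertex of maximum out-degree and shows it must already be a king. For the two-coloured setting, write $\Gamma_R(x)$ and $\Gamma_B(x)$ for the red and blue out-neighbourhoods of $x$. The natural first attempt --- choosing $\nu$ to maximise a single colour's out-degree, say $|\Gamma_R(\nu)|$ --- does not close, and this is the one genuinely delicate point: an unreachable vertex forces a gain that is split between the two colours, so no single colour on its own exhibits the needed increase. The fix is to maximise the \emph{combined} out-degree $d(x) := |\Gamma_R(x)| + |\Gamma_B(x)|$, and to take $\nu$ to be any vertex attaining $\max_x d(x)$.

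Next I would assume for contradiction that this $\nu$ is not a rainbow king and fix a vertex $j \neq \nu$ that $\nu$ fails to reach by any rainbow path of length at most $2$. Unpacking the failure of each of the four possible short connections gives four facts. Since the two direct edges are missing, $(j,\nu)$ is red and $(j,\nu)$ is blue, so $\nu \in \Gamma_R(j) \cap \Gamma_B(j)$. Since there is no red-then-blue route through $\nu$'s red out-neighbours, every $k \in \Gamma_R(\nu)$ has $j \to k$ in blue, i.e. $\Gamma_R(\nu) \subseteq \Gamma_B(j)$; and symmetrically the absence of a blue-then-red route gives $\Gamma_B(\nu) \subseteq \Gamma_R(j)$. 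In words, $j$ beats in blue everyone that $\nu$ beats in red, beats in red everyone that $\nu$ beats in blue, and beats $\nu$ in both colours.

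Finally I would cash these containments in as degree inequalities. Because $\nu \notin \Gamma_B(\nu)$, the sets $\{\nu\}$ and $\Gamma_B(\nu)$ are disjoint, whence $|\Gamma_R(j)| \ge 1 + |\Gamma_B(\nu)|$; likewise $|\Gamma_B(j)| \ge 1 + |\Gamma_R(\nu)|$. Adding these, $d(j) \ge 2 + |\Gamma_R(\nu)| + |\Gamma_B(\nu)| = d(\nu) + 2 > d(\nu)$, contradicting the maximality of $d(\nu)$. Hence $\nu$ reaches every vertex along a rainbow path of length at most $2$ and is the desired king. The only thing to watch is the bookkeeping that makes the two colour-gains land in \emph{different} out-degrees of $j$ --- one in red, one in blue --- which is precisely what rescues the argument from the failed single-colour version.
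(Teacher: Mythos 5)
Your proof is correct, and it takes a genuinely different route from the one in the paper. The paper argues by induction on $n$: it deletes the vertex $p$ whose in-degree, over both tournaments, is largest (say realized in red), takes a rainbow king $k$ of the smaller instance, and observes that if $k$ failed to reach $p$ directly or by a blue-red path, then neither $k$ nor any blue out-neighbor of $k$ could beat $p$ in red, forcing the red in-degree of $p$ strictly below the blue in-degree of $k$ and contradicting the choice of $p$. You instead give a direct, induction-free extremal argument: any $\nu$ maximizing the combined out-degree $d(\nu)=|\Gamma_R(\nu)|+|\Gamma_B(\nu)|$ is already a rainbow king, because an unreachable $j$ would satisfy $\Gamma_R(\nu)\cup\{\nu\}\subseteq\Gamma_B(j)$ and $\Gamma_B(\nu)\cup\{\nu\}\subseteq\Gamma_R(j)$, whence $d(j)\ge d(\nu)+2$. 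All steps check: since $j$ beats $\nu$ in both colors, $j\notin\Gamma_R(\nu)\cup\Gamma_B(\nu)$, so the containments are legitimate, and the two gains of $+1$ land in different colors of $j$'s out-degree, exactly as you note. What your version buys: it is the literal two-color analogue of Landau's classical one-line proof, it exhibits an explicit witness (indeed it shows \emph{every} maximizer of $d$ is a rainbow king), and it avoids recursion entirely. The paper's version is dual in spirit --- extremal in-degree rather than out-degree --- but pays for only having to check the single pair $(k,p)$ with an induction over $n$. Your diagnosis of why maximizing a single color fails is also accurate: unreachability of $j$ transfers $\nu$'s red out-neighbors into $j$'s \emph{blue} out-neighborhood and vice versa, so no single-color count can close the argument; summing the two colors is precisely the needed fix.
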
 

The critical difference is that in Theorem \ref{KingRB} $j$ must be reachable by either a blue edge, or by a red edge, or by a red edge followed by a blue edge (but a blue edge followed by a red edge does not count).  In Corollary \ref{cor:king} on the other hand, $j$ must be reachable by eiter of the above {\bf or} by a blue edge followed by a red edge. This weaker corollary is not strong enough for our application \cite{ag12}, where $T_R$ corresponds to a first round of message delivery and $T_B$ to a second round, where $T_R$ and $T_B$ are selected by an adversary. And by Theorem \ref{KingRB} regardless of the adversary choices there is a node whose information becomes known to everyone in two rounds. This does not follow from Corollary \ref{cor:king}.

\begin{proof} While it is a straightforward corollary of Theorem \ref{KingRB} here is a simpler proof of Corollary \ref{cor:king}.
Let $p$ be of the largest in-degree in either $T_R$ or $T_B$. Assume, $w.l.o.g$ that the largest in-degree is realized in $T_R$. Then delete $p$ and use induction. Let the resulting king be $k$. As the blue in-degree of $k$ is no larger than the red in-degree of $p$ (by the definition of $p$), there is a path of length $1$ from $k$ to $p$ or a blue-red path of length $2$ from $k$ to $p$.
\end{proof}

\end{document}